\documentclass[12pt, 14paper,reqno]{amsart}
\vsize=21.1truecm
\hsize=15.2truecm
\vskip.1in
\usepackage{amsmath,amsfonts,amssymb}

\theoremstyle{plain}
\newtheorem{thm}{Theorem}
\newtheorem{lemma}{Lemma}
\newtheorem{prop}{Proposition}
\newtheorem{cs}{Case}

\theoremstyle{definition}

\theoremstyle{remark}

\numberwithin{equation}{section}
\numberwithin{lemma}{section}
\numberwithin{thm}{section}
\usepackage{amsmath}
\usepackage{amsfonts}   
\usepackage{amssymb}
\usepackage{amssymb, amsmath, amsthm}
\usepackage[breaklinks]{hyperref}

\usepackage{graphicx}
\usepackage{amsthm}

\begin{document} 

\title{ A note on the class number of certain real cyclotomic fields}
\author{Om Prakash}
\address{Kerala School of Mathematics, Kozhikode - 673571, Kerala, India.}
\email{omprakash@ksom.res.in}
\keywords{Diophantine equation, Continued fraction, Class number, Quadratic field,  Real cyclotomic field.}
\subjclass[2010] {Primary: 11D09, 11R29 Secondary: 11R11, 11R18}
\maketitle

\begin{abstract} 
We construct an infinite family of real cyclotomic fields with non-trivial class group. This result generalizes the result in \cite{Kalyan} in the sense that our family includes theirs.
\end{abstract}
\section{Introduction}
The ring of rational integers $\mathbb{Z}$ has unique factorization of integers into primes. But $\mathcal{O}_{K}$, the ring of integers for any number field $K$ does not necessarily have the property of unique factorization. The extent of failure of unique factorization is measured by the ideal class group which is a finite abelian group and its cardinality is known as the class number of $K$. Although class number is the most fundamental invariant of a number field, there is still a lot of mystery around class numbers. For instance, a well known conjecture states that there exist infinitely many number fields with class number $1$, or even with bounded class number. The `class number one problem' for real quadratic fields is a `folklore conjecture'.

The class number of cyclotomic fields have been studied extensively since  the time mathematicians established linkage between Fermat's last theorem and the unique factorization properties of cyclotomic integers more than a century ago. The class number $h_n$ of cyclotomic field $\mathbb{Q}({\zeta}_n)$ can be written as the product $$h_n={h_n}^+{h_n}^-$$
where ${h_n}^+$ denotes the class number of  real cyclotomic field  $\mathbb{Q}({\zeta}_n)^+$ ($:=\mathbb{Q}({\zeta}_n+{{\zeta}_n}^{-1})$) of cyclotomic field $\mathbb{Q}({\zeta}_n)$.

The class number of real cyclotomic field (also known as the `plus part') remains one of the most enigmatic object to study.

The `minus part' ${h_n}^-$, which is also known as the relative class number, is defined by the quotient $$
{h_n}^- = \frac{h_n}{{h_n}^+}.
$$
This is a tautology up to this point, but it is not tautological to note that ${h_n}^-$ can be calculated quite explicitly by using the class number formula. On applying the class number formula to $\mathbb{Q}({\zeta}_n)$ and $\mathbb{Q}({\zeta}_n)^+$ and than taking the quotient, leaves us with the expression, 
$$
{h_n}^- = \mathcal{Q}m \prod_{\chi \hspace{0.3cm}\text{odd}}{(\frac{-1}{2}B_{1,\chi})}.
$$
Here the product runs over odd Dirichlet character $\chi$; $B_{1,\chi}$ denotes the generalised Bernoulli numbers and 
$$
\mathcal{Q} = \begin{cases}
1 & {\rm ~if~} n\hspace{0.1cm} \text{is a power of prime} \\
2 & {\rm ~otherwise.}
\end{cases}
$$
Despite the fact that ${h_n}^-$ grows exponentially with $n$; ${h_n}^-$ can be directly computed.

On the other hand the `plus part' is still a mystery. Their Minkowski bounds are too large to be effective for the fields with large $n$, and their discriminant are also far to large for Odlyzko's discriminant bound to treat their class number.

Ankeny et al. \cite{ac} showed that, if $p=(2nq)^2+1$ is a prime where $q$ is also a prime and $n>1$ is an integer then $h_p^+>1$. Lang \cite{lang} has also proved the same result for $p=\{(2n+1)q\}^2+4$ where $q$ is a prime and $n\geq 1$ is an integer. In 1987, Osada \cite{osada} generalised \cite{ac} and \cite{lang}.

In a recent work Mishra et al. \cite{Mohit} showed the existence of infinitely many real cyclotomic fields with large class number. Their proof is existential. On the other hand Chakraborty and Hoque \cite{Kalyan} has given explicit families of  real cyclotomic fields having class number strictly bigger than $1$. More precisely they proved the following:

\begin{thm}{\bf(K.Chakraborty and A. Hoque)}
\begin{itemize}
    \item[1.] Let $p\equiv 1 \pmod 4$ and $n\geq 1$ an integer. If $d=(2np)^2-1$, then $h_{4d}^+>1$.
    \item[2.] Let $p\equiv \pm 1 \pmod 4$ be a prime and $n\geq 1$ an integer. If $d=(2np)^2+3$ and $n$ a multiple of $3$, then $h_{4d}^+>1$.
    \item[3.] Let $p\equiv \pm 1 \pmod 8$ be a prime and $n\geq 1$ an integer. If $d=((2n+1)p)^2+2$, then $h_{4d}^+>1$.
    \item[4.] Let $p\equiv 1,3 \pmod 8$ be a prime not equal to 3 and $n\geq 1$ an integer. If $d=((2n+1)p)^2-2$, then $h_{4d}^+>1$.
\end{itemize}
\end{thm}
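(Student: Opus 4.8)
The plan is to reduce all four cases to one uniform statement about the real quadratic field $\mathbb{Q}(\sqrt{d})$ lying inside $\mathbb{Q}(\zeta_{4d})^+$, and then to settle that statement by the theory of continued fractions and Pell-type Diophantine equations. Since the conductor of $\mathbb{Q}(\sqrt d)$ divides $4d$ and $\mathbb{Q}(\sqrt d)$ is totally real, one has the tower $\mathbb{Q}\subset \mathbb{Q}(\sqrt d)\subset \mathbb{Q}(\zeta_{4d})^+$. The first thing I would record is that in every case $d$ is of Richaud--Degert type: writing $d=m^2+c$ one has $c=-1$ with $m=2np$ in case~1, $c=3$ with $m=2np$ in case~2, and $c=\pm 2$ with $m=(2n+1)p$ in cases~3 and~4. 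In particular $p\mid m$, so $d\equiv c\pmod p$, each such $d$ satisfies $d\equiv 3\pmod 4$ (so the relevant norm form is simply $x^2-dy^2$), and the continued fraction expansion of $\sqrt d$ is explicitly known with very short period.

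The arithmetic input is to manufacture a non-principal ideal, and the congruence hypotheses on $p$ are exactly what is needed. Because $d\equiv c\pmod p$, the splitting of $p$ in $\mathbb{Q}(\sqrt d)$ is governed by the Legendre symbol $\left(\frac{c}{p}\right)$, and the stated conditions $p\equiv 1\pmod 4$, $p\equiv\pm1\pmod 8$, and $p\equiv 1,3\pmod 8$ are precisely the quadratic reciprocity criteria forcing $c=-1,\,2,\,-2$ to be a square modulo $p$ in cases~1, 3 and~4 respectively; case~2 (where $c=3$) is handled in the same spirit, the extra hypothesis $3\mid n$ together with the condition on $p$ supplying the requisite splitting. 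Hence $p$ splits, producing a prime ideal $\mathfrak p$ of norm $p$, and since $p\mid m$ gives $p\le m/2<\sqrt d$ we have a prime ideal of small norm.

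Next I would show $\mathfrak p$ is non-principal. If $\mathfrak p=(\alpha)$ then taking norms yields a solution of $x^2-dy^2=\pm p$ with $0<p<\sqrt d$. By the classical theory of the continued fraction of $\sqrt d$, any integer $N$ with $0<|N|<\sqrt d$ so represented must occur as one of the partial denominators $Q_i$ in the expansion; the explicit short period from the first step shows that the only values $Q_i$ below $\sqrt d$ form a tiny explicit set of small integers, and the hypotheses on $p$ (including the exclusion $p\neq 3$ in case~4) guarantee that $p$ is not among them. This contradiction makes $\mathfrak p$ non-principal, so the relevant class group attached to $\mathbb{Q}(\sqrt d)$ is non-trivial.

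The step I expect to be the main obstacle is the passage from this non-triviality back to $h_{4d}^+>1$. One cannot merely quote $h(\mathbb{Q}(\sqrt d))>1$: the squarefree part of $d$ may already have class number one (e.g. $d=99$ gives $\mathbb{Q}(\sqrt{11})$), and what the continued fraction of $\sqrt d$ actually detects is a non-principal ideal of the order $\mathbb{Z}[\sqrt d]$, which need not be maximal --- equivalently a non-trivial class in a suitable ray/ring class group of modulus dividing $4d$. The genuine work is to identify this class group with a piece captured inside the real cyclotomic field $\mathbb{Q}(\zeta_{4d})^+$, which is exactly where the conductor $4d$ enters, and to check via the norm and extension maps in the tower (so that the order of the class is not annihilated by the relative degree $[\mathbb{Q}(\zeta_{4d})^+:\mathbb{Q}(\sqrt d)]$) that the non-trivial class survives to force $h_{4d}^+>1$. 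By contrast the continued fraction bookkeeping, though case-dependent, is routine once the Richaud--Degert expansions are in hand.
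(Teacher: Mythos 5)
Your Diophantine half coincides with the paper's own route. The paper (following \cite{Kalyan}) proves this theorem exactly as you begin: show that $x^2-dy^2=\pm p$ has no integer solution for these Richaud--Degert type $d$ via the continued-fraction expansion of $\sqrt d$, then note that a split prime above $p$ would otherwise be principal. But there are two genuine gaps in what you wrote. First, case~2: your claim that the stated hypotheses force $p$ to split is false. The condition $p\equiv\pm 1\pmod 4$ holds for \emph{every} odd prime, and it does not make $3$ a quadratic residue modulo $p$; for instance $p=5$, $n=3$ is allowed, yet $\left(\frac{3}{5}\right)=-1$, so $5$ is inert in $\mathbb{Q}(\sqrt{903})$ and no non-principal ideal of norm $5$ exists to be found. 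So ``the requisite splitting'' in case~2 is not supplied by anything in your argument, and that case is simply not handled.

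Second, and more seriously, the step you yourself single out as ``the main obstacle'' --- passing from the non-principal ideal to $h_{4d}^+>1$ --- is never actually carried out: your final paragraph describes work to be done, not an argument. The paper closes this step by quoting Yamaguchi's result (Lemma \ref{l2}), which lifts the class number of the quadratic field into $h_{4d}^+$ once $\phi(d)>4$. Your instinct that one cannot merely quote $h(\mathbb{Q}(\sqrt d))>1$ is sound, and your $d=99$ example is exactly right ($99=(2\cdot 5)^2-1$ while $\mathbb{Q}(\sqrt{99})=\mathbb{Q}(\sqrt{11})$ has class number $1$), so the statement as quoted, with no squarefree hypothesis, is genuinely out of reach of the quadratic-field argument. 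But the substitute you sketch does not work as described: the ring class field of the order $\mathbb{Z}[\sqrt d]$ is generalized dihedral over $\mathbb{Q}$, not abelian in general, hence is \emph{not} contained in $\mathbb{Q}(\zeta_{4d})^+$, so there is no ``identification of this class group with a piece captured inside the real cyclotomic field''; and what actually has to be controlled is the kernel of the extension map of class groups (capitulation), which your proposed check on norm maps and the relative degree does not control. Be aware that this lifting step has real content even for squarefree $d$: for example $h(\mathbb{Q}(\sqrt{15}))=2$ while $h_{60}^+=1$, so divisibility statements of Yamaguchi type depend delicately on their hypotheses and cannot be treated as formal bookkeeping. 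In short, you reproduce the paper's first half, but the half that actually produces $h_{4d}^+>1$ is missing, and case~2 is wrong as stated.
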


Their idea was to first prove that the equation $x^2-dy^2=\pm p$ has no solution for $d$ of the form 
$$
d = \begin{cases}
(2np)^2-1 \hspace{0.99cm} {\rm~with~}\hspace{0.2cm} p\equiv 1 \pmod 4\\
(2np)^2+3 \hspace{0.99cm}{\rm~with~}\hspace{0.2cm} p\equiv \pm 1 \pmod 4\\
((2n+1)p)^2+2 {\rm~with~}\hspace{0.2cm} p\equiv \pm 1 \pmod 8\\
((2n+1)p)^2-2 {\rm~with~}\hspace{0.2cm} p\equiv 1,3 \pmod 8\\
\end{cases}$$
where $p$ is a prime and $n$ is a positive integer. Then they lift the class group of the quadratic field to the real cyclotomic field using the following result of Yamaguchi \cite{yama}.

\begin{lemma}\label{l2}
Let $h$ be the class number of quadratic field $\mathbb{Q}(\sqrt{d})$, if $d>0$ is an integer with $\phi(d) > 4$, where $\phi$ is Euler's totient function, then $h|{h_{4d}}^+$.
\end{lemma}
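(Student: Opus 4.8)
The plan is to realize $\mathbb{Q}(\sqrt d)$ as a subfield of $K:=\mathbb{Q}(\zeta_{4d})^{+}$ and then to transport the ideal class group of the quadratic field up into $\mathrm{Cl}(K)$ by a class field theory argument. First I would verify the containment. Since $d>0$, the field $\mathbb{Q}(\sqrt d)$ is totally real, so it lies in the maximal real subfield $K$ of $\mathbb{Q}(\zeta_{4d})$ as soon as $\mathbb{Q}(\sqrt d)\subseteq \mathbb{Q}(\zeta_{4d})$. The latter holds because the conductor of $\mathbb{Q}(\sqrt d)$ is its discriminant, which is $d$ when $d\equiv 1\pmod 4$ and $4d$ when $d\equiv 2,3\pmod 4$; in either case it divides $4d$, so Kronecker--Weber places $\mathbb{Q}(\sqrt d)$ inside $\mathbb{Q}(\zeta_{4d})$. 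Write $F:=\mathbb{Q}(\sqrt d)$, so that $F\subseteq K$.

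The core mechanism is the following. Let $H_F$ and $H_K$ be the Hilbert class fields of $F$ and $K$. If I can show
$$ K\cap H_F = F, $$
then restriction gives an isomorphism $\operatorname{Gal}(KH_F/K)\xrightarrow{\ \sim\ }\operatorname{Gal}(H_F/F)\cong \mathrm{Cl}(F)$. Because $H_F/F$ is unramified at every finite prime, the compositum $KH_F/K$ is likewise unramified everywhere and it is abelian, hence $KH_F\subseteq H_K$. Consequently $\operatorname{Gal}(H_K/K)=\mathrm{Cl}(K)$ surjects onto $\operatorname{Gal}(KH_F/K)\cong\mathrm{Cl}(F)$, and since the order of a quotient divides the order of the group, $h=|\mathrm{Cl}(F)|$ divides $|\mathrm{Cl}(K)|=h_{4d}^{+}$. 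This is exactly the asserted divisibility, and it requires no estimate on class numbers once $K\cap H_F=F$ is in hand.

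Establishing $K\cap H_F=F$ is the step I expect to be the main obstacle. The clean route is to exhibit a prime of $F$ that is totally ramified in $K$: such a prime is unramified in $K\cap H_F$ (a subfield of the unramified extension $H_F/F$) yet totally ramified in every field between $F$ and $K$, including $K\cap H_F$, which forces $K\cap H_F=F$. To produce such a prime I would analyze the inertia subgroups of $\operatorname{Gal}(\mathbb{Q}(\zeta_{4d})/\mathbb{Q})\cong(\mathbb{Z}/4d\mathbb{Z})^{\times}$ at the rational primes dividing $4d$ (namely $2$ and the odd primes $\ell\mid d$), project to $\operatorname{Gal}(K/\mathbb{Q})$ by quotienting out complex conjugation, and track how the ramification already present in $F/\mathbb{Q}$ propagates to the relative extension $K/F$.

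The hypothesis $\phi(d)>4$ enters precisely here. Since $d\mid 4d$ gives $\phi(d)\mid\phi(4d)$, it forces $[K:\mathbb{Q}]=\phi(4d)/2>2$, so that $F\subsetneq K$ is a genuine extension; more importantly, it excludes the small degenerate values of $d$ for which the relevant inertia groups are too small to supply a totally ramified prime in $K/F$. I would organize the verification according to the residue of $d$ modulo $8$, since $\mathbb{Q}(\zeta_4)\subseteq\mathbb{Q}(\zeta_{4d})$ makes the prime $2$ the delicate case, while the odd ramified primes can be handled uniformly. Granting the existence of the totally ramified prime, the class field theory argument above closes the proof immediately.
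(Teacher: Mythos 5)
Your class-field-theoretic mechanism is sound in itself, but note first that the paper never proves this lemma at all---it is quoted from Yamaguchi \cite{yama}---so your attempt has to stand on its own merits. It cannot: the step you flagged as ``the main obstacle,'' namely $K\cap H_F=F$, is not merely hard, it is false under the stated hypotheses. The obstruction is the genus field. The wide genus field $G_F$ of $F=\mathbb{Q}(\sqrt{d})$, i.e.\ the maximal extension of $F$ that is unramified everywhere (infinite places included) and abelian over $\mathbb{Q}$, is totally real and is generated by square roots of the prime discriminants dividing $\mathrm{disc}(F)$; hence its conductor divides $4d$, so $G_F\subseteq\mathbb{Q}(\zeta_{4d})$ and therefore $G_F\subseteq K=\mathbb{Q}(\zeta_{4d})^{+}$. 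Consequently $K\cap H_F\supseteq G_F$, which is in general a proper extension of $F$. Concretely, take $d=15$: then $H_F=\mathbb{Q}(\sqrt{3},\sqrt{5})$ is totally real, unramified over $F$ everywhere, and contained in $\mathbb{Q}(\zeta_{60})$, so $K\cap H_F=H_F\neq F$. Your fallback route of exhibiting a prime of $F$ totally ramified in $K/F$ is also provably closed: since $G_F/F$ is unramified everywhere, every inertia subgroup of $\mathrm{Gal}(K/F)$ lies inside the proper subgroup $\mathrm{Gal}(K/G_F)$, so no such prime exists once $G_F\neq F$.

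Worse, no repair is possible, because the lemma as quoted is false. For $d=15$ we have $\phi(15)=8>4$ and $h=h(\mathbb{Q}(\sqrt{15}))=2$ (the ramified prime above $2$ is non-principal, since $x^2-15y^2=\pm 2$ has no solution modulo $5$), yet $h_{60}=1$ by Masley's determination of the cyclotomic fields with class number one, whence $h_{60}^{+}=1$ and $h\nmid h_{60}^{+}$. So the statement given here (copied from \cite{Kalyan}) must omit hypotheses present in Yamaguchi's original theorem; any correct version has to exclude at least those $d$ for which the genus field of $\mathbb{Q}(\sqrt{d})$ is a nontrivial subfield of $\mathbb{Q}(\zeta_{4d})^{+}$, and that exclusion is precisely the point at which your otherwise standard and correctly executed transport of $\mathrm{Cl}(F)$ into $\mathrm{Cl}(K)$ breaks down. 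Your instinct about where the difficulty lay was exactly right; the difficulty is fatal to the statement itself, not just to the proof.
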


In this article, we obtain an infinite family of $d$'s with non-trivial class number for real cyclotomic fields (which contains \cite{Kalyan}).The main result of this note is: 

\begin{thm} \label{t11}
If $d=\alpha n^2 + \beta n +\gamma$ is a square-free positive integer, then ${h_{4d}}^+>1$.
\end{thm}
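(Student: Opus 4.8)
The plan is to reduce the cyclotomic statement to one about the real quadratic field $\mathbb{Q}(\sqrt{d})$ and then invoke the lifting result. Since any $d$ of the form $\alpha n^{2}+\beta n+\gamma$ in the intended family is large, the hypothesis $\phi(d)>4$ of Lemma~\ref{l2} holds automatically, so it suffices to show that the class number $h$ of $\mathbb{Q}(\sqrt d)$ satisfies $h>1$; then Lemma~\ref{l2} gives $h\mid h_{4d}^{+}$ and hence $h_{4d}^{+}>1$. The square-free hypothesis on $d$ enters exactly here: it guarantees that $d$ is the radicand of the field, fixes the ring of integers, and lets me identify the ideal class group of $\mathbb{Q}(\sqrt d)$ with the form class group of the corresponding discriminant, which is the dictionary I will use throughout.

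To prove $h>1$ I would exhibit a single prime that is represented by a non-principal ideal. Let $p$ be the prime built into the coefficients $\alpha,\beta,\gamma$ (as in the four cases of Chakraborty--Hoque, where $\alpha$ is essentially $p^{2}$ or $4p^{2}$). First I would compute the Legendre/Jacobi symbol $\left(\tfrac{d}{p}\right)$: using $d\equiv\gamma\pmod p$ together with the congruence condition imposed on $p$ (modulo $4$ or modulo $8$), this symbol equals $+1$, so $p$ splits and there is a prime ideal $\mathfrak{p}$ with $N(\mathfrak p)=p$. If one had $h=1$, then $\mathfrak p$ would be principal, and taking norms would force the Pell-type equation $x^{2}-dy^{2}=\pm p$ (or $\pm 4p$, depending on the residue of $d$ modulo $4$) to be solvable in integers. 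Thus the whole problem is reduced to showing that this equation has no solution.

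For the non-solvability I would use the continued fraction of $\sqrt d$, which is where the shape $d=\alpha n^{2}+\beta n+\gamma$ is exploited. Since $\alpha$ is a perfect square, completing the square writes $d$ as $(\text{a linear polynomial in }n)^{2}$ plus a bounded constant, so $\sqrt d$ has an explicit eventually periodic expansion $[a_0;\overline{a_1,\dots,a_\ell}]$ of short period whose associated quantities $Q_0,Q_1,\dots$ can be listed once and for all. I would then apply the classical criterion that, for $|m|<\sqrt d$, the equation $x^{2}-dy^{2}=m$ is solvable precisely when $m=(-1)^{i}Q_i$ for some $i$ within one period. Because $n\ge 1$ forces $p<\sqrt d$, the value $\pm p$ is eligible for this test, and a direct inspection shows that the $Q_i$ occurring in the period are $1$ together with a few explicit linear-in-$n$ quantities which the congruence conditions on $p$ force to be coprime to $p$; hence $\pm p$ never appears and the equation is unsolvable. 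This makes $\mathfrak p$ non-principal and yields $h>1$.

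The main obstacle is making the continued fraction analysis uniform over the entire family. After completing the square the constant remainder and the parity of the linear coefficient perturb the partial quotients, so the expansion, its period $\ell$, and the list of $Q_i$ must be organized into a few subcases keyed to $\beta$, $\gamma$ and the residues of $n$ modulo small moduli; in each subcase one must verify simultaneously the size bound $p<\sqrt d$ and the non-divisibility $p\nmid Q_i$. All the genuine content sits in this finite computation together with the quadratic-reciprocity bookkeeping for $\left(\tfrac{d}{p}\right)$; the passage from $h>1$ to $h_{4d}^{+}>1$ is then immediate from Lemma~\ref{l2}.
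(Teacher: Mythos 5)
Your argument has a genuine gap at its foundation: it presupposes that a prime $p$ is ``built into the coefficients $\alpha,\beta,\gamma$,'' as in the four Chakraborty--Hoque families where $\alpha$ is essentially $p^2$ or $4p^2$. But in Theorem \ref{t11} the coefficients come from Proposition \ref{p1}: they are determined by an \emph{arbitrary} symmetric sequence $a_1,\dots,a_{t-1}$ of positive integers (with $t$ even and $q_{t-1}$ odd), and in general no prime whatsoever is encoded in them; the families of \cite{Kalyan} correspond only to a few specific short sequences (e.g.\ $t=2$, $a_1=1$ gives $d=(2np)^2-1$). Consequently the two pillars of your reduction---evaluating $\left(\frac{d}{p}\right)$ via congruence conditions on $p$ modulo $4$ or $8$, and deducing $p\nmid Q_i$ from those same conditions---have no meaning for a general member of the family. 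As written, your proposal proves (a continued-fraction variant of) the Chakraborty--Hoque theorem, not the generalization that is the content of Theorem \ref{t11}. To repair it you would need, for each $d$ in the family, to produce an auxiliary prime $p$ that simultaneously splits in $\mathbb{Q}(\sqrt d)$, satisfies $p<\sqrt d$ (so your $Q_i$-criterion applies), and avoids the finitely many period values $Q_i$; no such construction appears in the proposal. (The paper is itself careless at the corresponding point---it takes an arbitrary prime $p\equiv 1\pmod 4$ and claims that $d\equiv 3\pmod 4$ forces $\left(\frac{d}{p}\right)=1$, which is not a valid implication---but your write-up does not fill this hole; it only sidesteps it by retreating to the special families.)

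Apart from this, your mechanism for the key non-solvability step genuinely differs from the paper's, and is worth recording. You invoke the classical criterion that a primitive solution of $x^2-dy^2=m$ with $|m|<\sqrt d$ forces $\pm m$ to equal some $Q_i$ from the period of $\sqrt d$; since the $Q_i$ other than $1$ grow linearly in $n$, a fixed small prime is excluded. Two remarks here: you do not need to ``complete the square'' to obtain the expansion, because the family is \emph{defined} by $\sqrt{d}=[z(n),\overline{a_1,\dots,a_{t-1},2z(n)}]$, so the period and the $Q_i$ are available by construction; and the primitivity your criterion requires is automatic, since $\gcd(x,y)^2$ would have to divide $p$. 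The paper's Lemma \ref{l1} instead argues by descent with the fundamental unit: take a minimal solution $(x_0,y_0)$, multiply $x_0-y_0\sqrt d$ by $\epsilon=p(n)+q\sqrt d$ from Proposition \ref{p1}, and use minimality to get $y_0\le|x_0q-p(n)y_0|$; because $p(n)^2-q^2d=1$, the quadratic terms in the resulting inequality cancel and $n$ is bounded in terms of $p$ and $q$. Both methods in truth establish non-solvability only for $n$ large relative to the chosen prime, and both conclude identically: the split prime ideal cannot be principal, so $h>1$, and Lemma \ref{l2} lifts this to $h_{4d}^{+}>1$. So your route is a viable, arguably cleaner, alternative for the Diophantine step, but as it stands the proposal does not prove the theorem in its stated generality.
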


Here $\alpha, \beta$ and $\gamma$ are as in Proposition \ref{p1}.
\section{Preliminary}
Let us recall that the continued fraction expression for $\sqrt{d}$ where $d$ is a square-free integer has the form 
$$
\sqrt{d}=[a_0,\overline{a_1,a_2,...,a_{t-1},a_t=2a_0}]
$$ 
where $a_1,a_2,...,a_{t-1}$ is a symmetric sequence. 

Moreover, the converse of the above fact is also true. The following proposition describes the converse.
\begin{prop} \cite{kala} \label{p1}
Let $a_1,a_2,...,a_{t-1}$ ($t\geq 1$) be any symmetric sequence of positive integers. Define a sequence $q_i$ for $-1\leq i \leq t$ by the recurrence relation $q_{i+1} = a_{i+1}q_i + q_{i-1}$, $q_{-1} = 0$, $q_0 = 1$. Then the equation
\begin{equation}\label{e1}
\sqrt{d}=[z,\overline{a_1,a_2,...,a_{t-1},2z]}, z= \lfloor d \rfloor    
\end{equation}
has infinitely many square-free positive solutions $d\equiv 2,3 \pmod 4$ if and only if $q_{t-1}$ is odd or $q_{t-1}$ and $q_{t-2}q_{t-3}$ are both even.
When either of the condition is satisfied then all the solutions of \eqref{e1} are given by 
$$
d=d(n)=\alpha n^2 + \beta n +\gamma ;~~ z(n)=\eta n + \mu,
$$ 
for integers $n\geq 1$. Here $\alpha, \beta, \gamma, \eta, \mu$ depend on $a_1,a_2,...,a_{t-1}$ with $\alpha \neq 0$ is a square; $\eta \neq 0$ and the discriminant is 
$$
{\beta}^2-4\alpha \gamma = (-1)^t~ \text{or} ~ 4(-1)^t ~\text{or} \hspace{0.2cm} 16(-1)^t.
$$
Moreover, if $d(n)$ is square-free and $d\equiv 2,3 \pmod 4$, the fundamental unit of $\mathbb{Q}(\sqrt{d})$ has the form 
\begin{equation}\label{e22}
\epsilon = p(n)+q\sqrt{d}
\end{equation} 
where $p$ is a linear polynomial, $q=q_{t-1}$ and both $p$ and $q$ depend on the sequence $a_1,a_2,...,a_{t-1}.$
\end{prop}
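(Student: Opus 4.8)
The plan is to convert the functional equation \eqref{e1} into a single algebraic identity using the transfer matrices of the continued fraction, and then to extract each assertion from that identity. Since $z=\lfloor\sqrt d\rfloor$, the quadratic irrational $\omega=\sqrt d+z$ is reduced, hence purely periodic, and \eqref{e1} forces $\omega=[\overline{2z,a_1,\dots,a_{t-1}}]$. I would encode one period as
\[
\begin{pmatrix} A & B\\ C & D\end{pmatrix}=\begin{pmatrix} 2z & 1\\ 1 & 0\end{pmatrix}\begin{pmatrix} a_1 & 1\\ 1 & 0\end{pmatrix}\cdots\begin{pmatrix} a_{t-1} & 1\\ 1 & 0\end{pmatrix},
\]
and exploit that the symmetry of $a_1,\dots,a_{t-1}$ makes $N=\prod_{i=1}^{t-1}\begin{pmatrix} a_i & 1\\ 1 & 0\end{pmatrix}$ a symmetric matrix whose entries are $q_{t-1}$, $q_{t-2}$ and the continuant $u:=(q_{t-2}^2+(-1)^{t-1})/q_{t-1}$. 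The fixed-point relation $\omega=(A\omega+B)/(C\omega+D)$ then collapses to $d-z^2=(2q_{t-2}z+u)/q_{t-1}$, and after clearing denominators, using $\det N=(-1)^{t-1}$, I expect everything to telescope into the single identity
\[
(q_{t-1}z+q_{t-2})^2-q_{t-1}^2\,d=(-1)^t .
\]

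Second, from this formula \eqref{e1} has an integer solution $d$ exactly when $z$ solves the linear congruence $2q_{t-2}z+u\equiv 0 \pmod{q_{t-1}}$. Because consecutive denominators satisfy $\gcd(q_{t-1},q_{t-2})=1$, this is solvable for an entire residue class of $z$ when $q_{t-1}$ is odd, and (when $q_{t-1}$ is even, which forces $q_{t-2}$ odd) is solvable precisely when $u$ is even. The heart of the equivalence is thus to match the solvability of this congruence with the stated parity condition on $q_{t-1},q_{t-2},q_{t-3}$; concretely one must control the parity of $u=(q_{t-2}^2+(-1)^{t-1})/q_{t-1}$, which is a genuinely $2$-adic question rather than a consequence of soft matrix algebra. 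This is the step I expect to be the main obstacle. The ``only if'' direction is the easy half: if the condition fails the congruence has no solution, so no square-free $d$ can arise.

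Third, when the condition holds the admissible $z$ form an arithmetic progression $z=\eta n+\mu$, and substituting into the key identity gives $q_{t-1}^2 d=(q_{t-1}(\eta n+\mu)+q_{t-2})^2-(-1)^t$, so $d=d(n)$ is a quadratic in $n$ with leading coefficient $\eta^2$; this already yields that $\alpha=\eta^2$ is a square. Computing the discriminant directly from this identity gives $\beta^2-4\alpha\gamma=4(-1)^t(\eta/q_{t-1})^2$, and the three listed values $(-1)^t,\,4(-1)^t,\,16(-1)^t$ correspond to the three possible step sizes $\eta\in\{q_{t-1}/2,\,q_{t-1},\,2q_{t-1}\}$: the base step is $q_{t-1}$ in the odd case and $q_{t-1}/2$ in the even case (where the congruence has two solutions per period), while imposing $d\equiv 2,3\pmod 4$ refines the $n$-progression and can double $\eta$ once or twice. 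To finish the ``if'' direction I would invoke that a quadratic polynomial with no fixed square divisor represents infinitely many square-free integers; here $d\equiv 2,3\pmod 4$ handles the prime $2$ and the smallness of $\beta^2-4\alpha\gamma$ rules out an odd square divisor, giving infinitely many square-free solutions.

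Finally, for \eqref{e22} the convergent $P_{t-1}/q_{t-1}$ obtained at the end of the first period is, by the standard theory, the fundamental solution of $x^2-dy^2=(-1)^t$, which is exactly the relation in the key identity with $P_{t-1}=q_{t-1}z+q_{t-2}$. Since $d\equiv 2,3\pmod 4$ gives $\mathcal O_{\mathbb Q(\sqrt d)}=\mathbb Z[\sqrt d]$, this fundamental Pell solution is the fundamental unit, so $\epsilon=p(n)+q\sqrt d$ with $q=q_{t-1}$ and $p(n)=q_{t-1}z(n)+q_{t-2}=q_{t-1}(\eta n+\mu)+q_{t-2}$ linear in $n$, as claimed. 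Minimality is precisely the content of the period-length statement of the continued fraction algorithm, so nothing beyond citing it is needed at this last step.
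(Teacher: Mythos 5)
You cannot be compared against the paper's own argument here, because the paper gives none: Proposition \ref{p1} is imported verbatim from \cite{kala} without proof. Judged on its own, your strategy is the right (indeed the standard Friesen-type) one, and most of it checks out: palindromy does make $N$ symmetric with entries $q_{t-1}$, $q_{t-2}$ and $u=(q_{t-2}^2+(-1)^{t-1})/q_{t-1}$; the fixed-point relation does collapse to $d=z^2+(2q_{t-2}z+u)/q_{t-1}$ and hence to the identity $(q_{t-1}z+q_{t-2})^2-q_{t-1}^2d=(-1)^t$; integrality of $d$ is exactly the congruence $2q_{t-2}z+u\equiv 0\pmod{q_{t-1}}$, solvable always when $q_{t-1}$ is odd and, when $q_{t-1}$ is even (so $q_{t-2}$ odd), solvable precisely when $u$ is even; and your computations $\alpha=\eta^2$ and $\beta^2-4\alpha\gamma=4(-1)^t(\eta/q_{t-1})^2$ are correct. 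The fundamental-unit step is also essentially right, with one caveat you gloss over: you need the period of $\sqrt{d}$ to be primitively $t$, else the first-period convergent gives only a power of the fundamental unit (e.g.\ $d=3$ sits in the family of the palindrome $(1,2,1)$ with $\sqrt{3}=[1,\overline{1,2,1,2}]$, and there $q_{t-1}z+q_{t-2}+q_{t-1}\sqrt{3}=7+4\sqrt{3}=(2+\sqrt{3})^2$).

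The step you yourself flagged as the main obstacle --- matching the parity of $u$ with the printed condition ``$q_{t-1}$ odd, or $q_{t-1}$ and $q_{t-2}q_{t-3}$ both even'' --- is not just hard, it is impossible, because the proposition as printed fails in both directions, and your congruence analysis is exactly what detects this. For the palindrome $(1,2,1)$ ($t=4$) one has $q_1=1$, $q_2=3$, $q_3=4$, so $q_{t-1}$ is even while $q_{t-2}q_{t-3}=3$ is odd and the printed condition fails; yet $u=2$ is even, the congruence $6z+2\equiv 0 \pmod 4$ just says $z$ is odd, and $d=z^2+(3z+1)/2$ produces $d=14,95,138,390,\dots$, all square-free and $\equiv 2,3 \pmod 4$ (e.g.\ $\sqrt{14}=[3,\overline{1,2,1,6}]$, $\sqrt{95}=[9,\overline{1,2,1,18}]$); since $d(n)=4n^2+7n+3=(4n+3)(n+1)$ is a product of coprime linear forms, infinitely many such $d$ are square-free, contradicting the stated ``only if''. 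Conversely, for the palindrome $(2,1,1,1,1,2)$ ($t=7$) one has $q_6=34$ and $q_5q_4=13\cdot 8$ both even, so the printed condition holds; but $u=K(1,1,1,1)=5$ is odd, so $2q_5z+u=26z+5$ is odd and never divisible by $34$, and \eqref{e1} has no integer solution at all, contradicting the stated ``if''. The criterion your method actually yields --- ``$q_{t-1}$ odd, or $u=(q_{t-2}^2+(-1)^{t-1})/q_{t-1}$ even'' --- is the correct one, and one can check that once this congruence is solvable, values $d\equiv 2,3\pmod 4$ are automatically attained along a subprogression (this disposes of the extra $2$-adic worry in your third step) and the tiny discriminant rules out fixed odd square divisors, so the square-free sieve applies. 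So complete your argument with that criterion in place of the printed parity clause; note that the present paper only ever invokes the case $t$ even with $q_{t-1}$ odd (e.g.\ $t=2$, $a_1=1$), which your argument does establish, so the paper's later use of the proposition is unaffected by the misstated condition.
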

In our case, we consider $t$ to be even so that the norm of the fundamental unit, $Nm_{{\mathbb{Q}(\sqrt{d})}/\mathbb{Q}}(\epsilon)=1$. Also $a_1,a_2,...,a_{s-1}$ be any symmetric sequence of positive integers such that $q_{t-1}$ is odd. Note that such class of symmetric sequences will be non-empty because if we take $t=2$ and consider symmetric sequence $a_1=1$, in this case $q_1=1$ is odd. Hence by Proposition \ref{p1}, the equation 
$$
\sqrt{d}=[z,\overline{a_1,a_2,...,a_{s-1},2z]}, z= \lfloor d \rfloor 
$$
has infinitely many square-free positive integer solutions $d\equiv 3 \pmod 4$. For example, if we take the same sequence $a_1=1$ as above then we can see that there are infinitely many $d$'s with $d\equiv 3 \pmod 4$.  We aren't interested in $d\equiv 2 \pmod 4$ for reasons which would be clear when we prove non-triviality of the class number. Therefore for us $d$ has the form:  
$$
d=\alpha n^2 + \beta n +\gamma.
$$ 
If $d$ is square-free, the fundamental unit of $\mathbb{Q}(\sqrt{d})$ is of the form: 
$$
\epsilon = p(n)+q\sqrt{d}.
$$
\section{Solvability of a Diophantine equation}

\begin{lemma} \label{l1}
The Diophantine equation
$$
x^2-dy^2=\pm p
$$
has no integer solutions for $d=\alpha n^2 + \beta n +\gamma$.
\end{lemma}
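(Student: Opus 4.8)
The plan is to reduce the unsolvability of $x^2-dy^2=\pm p$ to a statement about which small integers are represented by the principal norm form of $\mathbb{Q}(\sqrt{d})$, and then to read off the list of such integers from the continued fraction expansion of $\sqrt{d}$ furnished by Proposition \ref{p1}. Since we have arranged $d\equiv 3\pmod 4$, the ring of integers is $\mathbb{Z}[\sqrt{d}]$ and $x^2-dy^2$ is exactly the norm form; moreover, reducing modulo $4$ gives $x^2-dy^2\equiv x^2+y^2\in\{0,1,2\}\pmod 4$, so no integer congruent to $3\pmod 4$ is a represented norm. This already disposes of one of the two signs as soon as $\pm p\equiv 3\pmod 4$, and reduces the problem to a single target value $N=\pm p$ with $N\not\equiv 3\pmod 4$.

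For the remaining sign I would invoke the classical fact that, because $p<\sqrt{d}$ and $p$ is prime (so every solution is automatically primitive, as $\gcd(x,y)^2$ would divide $p$), any solution of $x^2-dy^2=\pm p$ must arise from a convergent of $\sqrt{d}$; equivalently, $p$ must equal one of the positive values $Q_i$, where $\tfrac{P_i+\sqrt{d}}{Q_i}$ are the complete quotients in the expansion $\sqrt{d}=[z,\overline{a_1,\dots,a_{t-1},2z}]$. The task then becomes the purely combinatorial one of computing the finite list $\{Q_0,Q_1,\dots,Q_t\}$ attached to the symmetric block $a_1,\dots,a_{t-1}$ and checking that the prime $p$ does not occur in it. Here the hypotheses of Proposition \ref{p1} enter: the recurrence $q_{i+1}=a_{i+1}q_i+q_{i-1}$, together with the oddness of $q_{t-1}$ and the palindromic symmetry of the $a_i$, lets one track the parity and size of each $Q_i$, while the fact that $t$ is even (so $N(\epsilon)=+1$ and $x^2-dy^2=-1$ is itself unsolvable) pins down the two ends of the period as $Q_0=Q_t=1$.

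The base case $t=2$, $a_1=1$ is instructive and guides the general argument: there $d=z^2+2z$, the complete quotients give $Q_0=1$, $Q_1=2z$, $Q_2=1$, so the only integers of absolute value below $\sqrt{d}$ represented by $x^2-dy^2$ are $1$ and $-2z$, neither of which is $\pm p$ for an odd prime $p$. I expect the general case to follow the same pattern: every $Q_i$ in the period is either $1$ or an even (more generally composite) quantity dictated by the $a_i$, so that no odd prime $p<\sqrt{d}$ can be a represented norm.

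The hard part will be making this second step uniform in the sequence: controlling the entire list $\{Q_i\}$ for an arbitrary symmetric block $a_1,\dots,a_{t-1}$, rather than for one explicit period, and proving in full generality that the relevant prime $p$ is excluded. The modular reduction handles one sign cheaply, but the real content — that $\pm p$ never coincides with a continued fraction denominator $Q_i$ — rests on extracting enough structural information (parity from $q_{t-1}$ odd, the symmetry of the $a_i$, and the $t$ even normalization) to bound and classify the $Q_i$; this combinatorial bookkeeping is where I expect the proof to concentrate its effort.
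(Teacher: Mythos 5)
Your reduction skeleton is legitimate: the mod $4$ observation (since $d\equiv 3\pmod 4$, $x^2-dy^2\equiv x^2+y^2\in\{0,1,2\}\pmod 4$) does kill the sign with $\pm p\equiv 3\pmod 4$, and primitivity plus $p<\sqrt{d}$ does force any solution of $x^2-dy^2=\pm p$ to come from a convergent, so that $p$ would have to appear among the denominators $Q_1,\dots,Q_{t-1}$ of the complete quotients. But the proof then stops exactly where the lemma begins: the claim that $p$ never occurs among the $Q_i$ is the entire content, and you explicitly defer it. Worse, the invariant you propose for closing it --- that every $Q_i$ is $1$ or even/composite --- is the wrong one and is not true for general symmetric families; parity and compositeness of the $Q_i$ are not what excludes $p$. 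What is true, and what your argument actually needs, is \emph{growth in $n$}: along the family the partial quotients $a_1,\dots,a_{t-1}$ are fixed while $\sqrt{d(n)}\sim\sqrt{\alpha}\,n$, so the relation $a_i=\lfloor (P_i+\sqrt{d})/Q_i\rfloor$ with $0\le P_i<\sqrt{d}$ forces $Q_i>\sqrt{d}/(a_i+1)$ for $1\le i\le t-1$; hence for a fixed prime $p$ and all sufficiently large $n$, no $Q_i$ equals $p$, while $Q_0=Q_t=1\neq p$. Note also that both your hypothesis $p<\sqrt{d}$ and the conclusion itself only make sense for $n$ large relative to $p$: read literally, for a fixed $d$ in the family and an arbitrary prime, the statement is false (e.g. $d=3$ belongs to the family via $t=2$, $a_1=1$, $z=1$, and $4^2-3\cdot 1^2=13$), so the lemma must be understood --- as the paper's own proof implicitly does --- as a statement about fixed $p$ and all but finitely many $n$.

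For comparison, the paper's mechanism is entirely different: it assumes a solution $(x_0,y_0)$ with $y_0\ge 1$ minimal, multiplies the norm equation $Nm(x_0-y_0\sqrt{d})=\pm p$ by the fundamental unit $\epsilon=p(n)+q\sqrt{d}$ supplied by Proposition \ref{p1}, and uses minimality to get $y_0\le |x_0q-p(n)y_0|$; squaring and substituting $x_0^2=dy_0^2\pm p$ produces a polynomial inequality in $n$ (whose quadratic term cancels because $p(n)^2-q^2d=1$, leaving a linear term with positive coefficient) that can hold for only finitely many $n$. So the paper exploits the linearity in $n$ of the fundamental unit, where your route would exploit the linearity in $n$ of the $Q_i$; both rest on the same feature of Proposition \ref{p1}, that the family's data are polynomials in $n$. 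Your approach could be completed along the lines sketched above and would be a genuinely different (and arguably cleaner) proof, but as submitted it is an outline whose decisive step is missing and is aimed at an invariant that would not deliver it.
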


\begin{proof}
First we consider
\begin{equation} \label{e2}
x^2-dy^2=p.
\end{equation}
If possible let us assume that \eqref{e2} has an integer solution $(x_0,y_0)$ and without loss of generality we assume that it is the smallest solution with $y_0\geq 1$.
Then,
\begin{equation} \label{e4}
x_0^2-dy_0^2=p.
\end{equation}
We re-write it as
\begin{equation} \label{e3}
Nm_{{\mathbb{Q}(\sqrt{d})}/\mathbb{Q}}(x_0-y_0\sqrt{d})=p
\end{equation}
and multiply the resulting equation \eqref{e3} by $\epsilon$ given by \eqref{e22}. Thus
$$
p= Nm_{{\mathbb{Q}(\sqrt{d})}/\mathbb{Q}}((x_0-y_0\sqrt{d}) \epsilon).
$$
Which is, 
\begin{eqnarray*}
p &=&Nm_{{\mathbb{Q}(\sqrt{d})}/\mathbb{Q}}((x_0-y_0\sqrt{d})(p(n)+q\sqrt{d})).\\
&=& Nm_{{\mathbb{Q}(\sqrt{d})}/\mathbb{Q}}((p(n)x_0-dqy_0)+(x_0q-p(n)y_0)\sqrt{d}).\\
&=& (p(n)x_0-dqy_0)^2 - (x_0q-p(n)y_0)^2 d.
\end{eqnarray*}
Now by the minimality of $y_0$, we have,
$$
y_0 \leq |x_0q-p(n)y_0|.
$$
\begin{cs} \label{c1}
If $y_0\leq x_0q-p(n)y_0$.

Then,
$$
(1+p(n))y_0\leq x_0q.
$$
We use \eqref{e4} and set $p(n)=x_1n+x_2$. This leads to 
\begin{eqnarray*}
\{(1+x_1n+x_2)^2-q^2d\}y_0^2&\leq& q^2p.\\
\{(1+x_1n+x_2)^2-q^2(\alpha n^2 + \beta n +\gamma)\}y_0^2&\leq& q^2p.
\end{eqnarray*}
Also
\begin{equation} \label{e5}
(x_1^2-q^2\alpha)n^2+(2x_1+2x_1x_2-q^2\beta)n+(x_2^2+2x_2+1-q^2\gamma)\leq \frac{q^2p}{y_0^2}.
\end{equation}
Since $x_1, x_2, p, q, \alpha, \beta, \gamma, y_0 $ are fixed integers, therefore, \eqref{e5} implies that there are finitely many $n's$, which further implies that there are finitely many $d's$. This leads to a contradiction.
\end{cs}
\begin{cs}
If $y_0\leq p(n)y_0-x_0q$.

Then, 
$$
x_0q\leq (p(n)-1)y_0.
$$
Again, using \eqref{e4} and setting $p(n)=x_1n+x_2$, we get, 
$$
q^2p\leq \{(x_1n+x_2-1)^2-q^2d\}y_0^2.
$$
This leads to  
$$
q^2p\leq \{(x_1n+x_2-1)^2-q^2(\alpha n^2 + \beta n +\gamma)\}y_0^2
$$
and
$$
q^2(\alpha n^2 + \beta n +\gamma)-(x_1n+x_2-1)^2 \leq \frac{-q^2p}{y_0^2}.
$$
Thus
\begin{equation}
(q^2\alpha-x_1^2)n^2+(q^2\beta-2x_1-2x_1x_2)n+(q^2\gamma-x_2^2-2x_2-1)\leq \frac{-q^2p}{y_0^2}.
\end{equation}
By the same argument as in case\eqref{c1}, we have a contradiction to the fact that there are infinitely many $d's$.
\end{cs}
Next, we consider
\begin{equation} \label{e6}
x^2-dy^2=-p.
\end{equation}
Suppose that \eqref{e6} has an integer solution. Without loss of generality assume that $(x_0,y_0)$ is the smallest solution with $y_0\geq 1$.
     
Then,
\begin{equation} \label{e8}
x_0^2-y_0^2=-p.
\end{equation}
We re-write
\begin{equation} \label{e7}
Nm_{{\mathbb{Q}(\sqrt{d})}/\mathbb{Q}}(x_0-y_0\sqrt{d})=-p.
\end{equation}
and multiply the resulting equation \eqref{e7} by the fundamental unit $\epsilon$.
   
Thus, $$-p= Nm_{{\mathbb{Q}(\sqrt{d})}/\mathbb{Q}}((x_0-y_0\sqrt{d}) \epsilon)$$ Which is, 
\begin{eqnarray*}
-p &=& Nm_{{\mathbb{Q}(\sqrt{d})}/\mathbb{Q}}((x_0-y_0\sqrt{d})(p(n)+q\sqrt{d})).\\
&=& Nm_{{\mathbb{Q}(\sqrt{d})}/\mathbb{Q}}((p(n)x_0-dqy_0)+(x_0q-p(n)y_0)\sqrt{d}).\\
&=& (p(n)x_0-dqy_0)^2 - (x_0q-p(n)y_0)^2 d.
\end{eqnarray*}
Now by the  minimality of $y_0$, we have,
$$
y_0 \leq |x_0q-p(n)y_0|.
$$
\begin{cs}
If $y_0\leq x_0q-p(n)y_0$. Then,
$$
(1+p(n))y_0\leq x_0q.
$$
We use \eqref{e8} and set $p(n)=x_1n+x_2$. This leads to 
\begin{eqnarray*}
-q^2p &\geq& \{(1+x_1n+x_2)^2-q^2d\}y_0^2 \\
& \geq& \{(1+x_1n+x_2)^2-q^2(\alpha n^2 + \beta n +\gamma)\}y_0^2.
\end{eqnarray*}
This implies 
$$
\frac{-q^2p}{y_0^2} \geq (x_1^2-q^2\alpha)n^2+(2x_1+2x_1x_2-q^2\beta)n+(x_2^2+2x_2+1-q^2\gamma),
$$
which contradicts the fact that there are infinitely many $d's$.
\end{cs}
\begin{cs}
If $y_0\leq p(n)y_0-x_0q$.

Then, 
$$
x_0q\leq (p(n)-1)y_0.
$$
We use \eqref{e8} and set $p(n)=x_1n+x_2$. This leads to
\begin{eqnarray*}
-q^2p &\leq& \{(x_1n+x_2-1)^2-q^2d\}y_0^2.\\
-q^2p &\leq& \{(x_1n+x_2-1)^2-q^2(\alpha n^2 + \beta n +\gamma)\}y_0^2.\\
\frac{q^2p}{y_0^2} &\geq& q^2(\alpha n^2 + \beta n +\gamma)-(x_1n+x_2-1)^2.
\end{eqnarray*}
This implies 
$$
(q^2\alpha-x_1^2)n^2+(q^2\beta-2x_1-2x_1x_2)n+(q^2\gamma-x_2^2-2x_2-1)\leq \frac{q^2p}{y_0^2},
$$
which contradicts the fact that there are infinitely many $d's$.
\end{cs}
This completes the proof of Lemma \ref{l1}
\end{proof}
\section{ proof of the Theorem \eqref{t11}}
Let $p\equiv 1\pmod 4$ be a prime. Since $d\equiv 3 \pmod 4$, the Legendre symbol 
$$
\left (\frac{d}{p}\right) = 1. 
$$
This implies, $p$ splits completely in $\mathbb{Q}(\sqrt{d})$, i.e. $p = \mathfrak{P}\mathfrak{P}^{'}$, where $\frak{P}$ and $\frak{P}^{'}$ are prime ideals. We also have  $Nm(\frak{P}) =Nm(\frak{P}^{'}) =p$.
   
Suppose if possible the class number $h$ of $\mathbb{Q}(\sqrt{d})$ is $1$. Then $\frak{P}$ is a principal ideal in $ \mathcal{O}_{\mathbb{Q}(\sqrt{d})}$, say $\frak{P} = \langle a+b\sqrt{d} \rangle $. Thus, we have $$p= Nm(\frak{P}) = Nm_{{\mathbb{Q}(\sqrt{d})}/\mathbb{Q}}(a+b\sqrt{d}) = |a^2-db^2|.$$
This implies
\begin{equation}
a^2-db^2 = \pm p,
\end{equation}
which contradicts the lemma\eqref{l1}.
Hence $h \neq 1$.
Except for finitely many $d's$, we have $\phi(d)>4$, therefore $h|{h_{4d}}^+$ (by Lemma \ref{l2}).
Since $h>1$, we have  ${h_{4d}}^+>1$.

\section{Concluding Remarks}
As mentioned earlier, if we consider the sequence $a_1=1$ for $t=2$, then in this case $d=(z+1)^2-1$. Now for $d$ to be square-free $z$ has to odd. Hence $d$ has to be of the form $(2np)^2-1$, which is precisely the family considered in \cite{Kalyan}. Thus here we get a more generalized and larger family.
\section*{Acknowledgement}
The author would like to thank Dr. Mohit Mishra for going through the article and his continuous support in the completion of this article. The scenic ambience of KSoM and support of the colleagues played an important role in completing the work.


\end{document}